\newtheorem{theorem}{Theorem}
\newtheorem{corollary}[theorem]{Corollary}
\begin{document}

\title{Rank probabilities for real random $N\times  N\times 2$ tensors}
\author{G{\"o}ran Bergqvist$^{1}$ and Peter J. Forrester$^{2}$ \\
     $^{1}$ Department of Mathematics, Link{\"o}ping University, \\
      SE-581 83 Link{\"o}ping, Sweden \\
 $^{2}$ Department of Mathematics and Statistics, University of Melbourne, \\
      Victoria 3010, Australia \\
      gober@mai.liu.se, p.forrester@ms.unimelb.edu.au}
\date{June 28, 2011}

\maketitle

\begin{abstract}
We prove that the probability $P_N$ for a real random Gaussian $N\times  N\times 2$ 
tensor to be of real rank $N$ is $P_N=(\Gamma((N+1)/2))^N/G(N+1)$, where $\Gamma(x)$, $G(x)$ denote the gamma and Barnes $G$-functions respectively. 
This is a rational number for $N$ odd and a rational number multiplied by $\pi^{N/2}$ for $N$ even.
The probability to be of rank $N+1$ is $1-P_N$.
The proof makes use of recent results on the probability of having 
$k$ real generalized eigenvalues for real random Gaussian $N\times  N$ matrices. 
We also prove that 
$\log P_N= (N^2/4)\log (e/4)+(\log N-1)/12-\zeta '(-1)+{\rm O}(1/N)$ for large $N$, where $\zeta$ is
the Riemann zeta function. 
\end{abstract}

\vskip.5cm
\noindent
Keywords: {\it tensors, multi-way arrays, typical rank, random matrices}

\vskip.2cm
\noindent
AMS classification codes: 15A69, 15B52, 60B20


\section{Introduction}

The (real) rank of a  real $m\times n\times p$ 3-tensor or 3-way array $\cal T$ is 
the well defined minimal possible value of $r$ in an expansion
\begin{equation}\label{eq:CP}
{\cal T}=\sum_{i=1}^r \,  {\bf u}_i\otimes {\bf v}_i\otimes {\bf w}_i \qquad (
 {\bf u}_i\in{\mathbb R^m}, {\bf v}_i\in{\mathbb R^n}, {\bf w}_i\in {\mathbb R^p})
\end{equation}
 where $\otimes$ denotes the tensor (or outer) product \cite{BEL,CBLC,SL,Kr2}.

If the elements of ${\cal T}$ are choosen randomly according to a continuous 
probability distribution, there is in general (for general $m$, $n$ and $p$) no generic rank,
 i.e., a rank which occurs with probability 1.
Ranks which occur with strictly positive probabilities are called typical ranks.
We assume that all elements are independent and from a standard 
normal (Gaussian) distribution (mean 0, variance 1). Until now, the only analytically 
known probabilities for typical ranks were for $2\times 2\times 2$ and $3\times 3\times 2$ 
tensors  \cite{B,FM}. Thus in the $2\times2\times 2$ case the probability that $r=2$ is $\pi/4$ and the probability that $r=3$ is $1-\pi/4$, while in the $3\times 3\times 2$ case the probability of the rank equaling 3 is the same as the probability of it equaling 4 which is $1/2$.
Before these analytic results the first numerical simulations were performed by 
Kruskal in 1989, for $2\times 2\times 2$ tensors \cite{Kr2}, and the approximate
values $0.79$ and $0.21$ obtained for the probability of ranks $r=2$ and $r=3$ respectively.
For $N\times N\times 2$ tensors ten Berge and Kiers \cite{tBK} have shown that the only 
typical ranks are $N$ and $N+1$. From ten Berge  \cite{tB},
it follows  that the probability $P_N$ for an $N\times N\times 2$ 
tensor to be of rank $N$ is equal to the probability that a pair of real random Gaussian  
$N\times N$ matrices $T_1$ and $T_2$ (the two slices of $\cal T$) has $N$ real 
generalized eigenvalues, i.e., the probability that $\det (T_1-\lambda T_2)=0$ has only real solutions 
$\lambda$ \cite{B,tB}. Knowledge about the expected number of real solutions to
$\det (T_1-\lambda T_2)=0$ obtained by Edelman et al.~\cite{Ed1} led to
the analytical results for $N=2$ and $N=3$ in \cite{B}.
Forrester and Mays \cite{FM} have recently determined the probabilities
$p_{N,k}$ that $\det (T_1-\lambda T_2)=0$ has $k$ real solutions, and we here apply
the results to $P_N=p_{N,N}$ to obtain explicit expressions for the probabilities for 
all typical ranks of  $N\times N\times 2$ tensors for arbitrary $N$, hence settling 
this open problem for tensor decompositions. We also determine the precise
asymptotic decay of $P_N$ for large $N$ and give some recursion formulas 
for $P_N$.


\section{Probabilities for typical ranks of  $N\times N\times 2$ tensors}

As above, assume that $T_1$ and $T_2$ are real random Gaussian $N\times N$  matrices 
and let $p_{N,k}$ be the probability that $\det (T_1-\lambda T_2)=0$ has $k$ real solutions.
Then Forrester and Mays  {\cite{FM}} prove:

\begin{theorem}\label{FM}
 Introduce the generating function
\begin{equation}
Z_N(\xi)=\sum_{k=0}^N{}^* \xi^kp_{N,k}
\end{equation}
where the asterisk indicates that the sum is over $k$ values of the same parity as $N$.
For $N$ even we have
\begin{equation}\label{FMeven}
Z_N(\xi)={\frac{(-1)^{N(N-2)/8}\Gamma(\frac{N+1}{2})^{N/2}
\Gamma(\frac{N+2}{2})^{N/2}}{2^{N(N-1)/2}\prod_{j=1}^{N}\Gamma(\frac{j}{2})^2}}
\prod_{l=0}^{\frac{N-2}{2}}(\xi^2\alpha_l+\beta_l),
\end{equation}
while for $N$ odd
\begin{equation}\label{FModd}
Z_N(\xi)={\frac{(-1)^{(N-1)(N-3)/8}\Gamma(\frac{N+1}{2})^{(N+1)/2}
\Gamma(\frac{N+2}{2})^{(N-1)/2}}{2^{N(N-1)/2}\prod_{j=1}^{N}\Gamma(\frac{j}{2})^2}}\ \pi\xi
\prod_{l=0}^{\lceil{\frac{N-1}{4}\rceil-1}}(\xi^2\alpha_l+\beta_l)
\prod_{\lceil{\frac{N-1}{4}\rceil}}^{\frac{N-3}{2}}(\xi^2\alpha_{l+1/2}+\beta_{l+1/2})
\end{equation}
Here
\begin{equation}
\alpha_l=\frac{2\pi}{N-1-4l}\ \frac{\Gamma(\frac{N+1}{2})}{\Gamma(\frac{N+2}{2})}
\end{equation}
and
\begin{equation}
\alpha_{l+1/2}=\frac{2\pi}{N-3-4l}\ \frac{\Gamma(\frac{N+1}{2})}{\Gamma(\frac{N+2}{2})}
\end{equation}
The expressions for $\beta_l$ and $\beta_{l+1/2}$ are given in \cite{FM}, but are not needed here, and 
$\lceil{\cdot}\rceil$ denotes the ceiling function.
\end{theorem}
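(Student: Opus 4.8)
The plan is to work not with the pencil $T_1-\lambda T_2$ directly but with the equivalent eigenvalue problem for the real spherical ensemble: when $T_2$ is invertible the generalized eigenvalues are exactly the eigenvalues of $A=T_2^{-1}T_1$, and the real solutions of $\det(T_1-\lambda T_2)=0$ are precisely the real eigenvalues of $A$. The first step is therefore to write down the joint eigenvalue density of $A$. Since $A$ is real, its spectrum consists of $k$ real eigenvalues $x_1,\dots,x_k$ (with $k\equiv N\bmod 2$) together with $(N-k)/2$ complex-conjugate pairs $z_j,\bar z_j$, $z_j$ in the upper half-plane. Changing variables from the matrix entries to the eigenvalues and integrating out the remaining angular degrees of freedom yields a density of Vandermonde/Pfaffian type: it is proportional to the modulus of $\prod_{a<b}(\lambda_a-\lambda_b)$ over all eigenvalues, times a product of scalar weights $w(x_i)$ over the real eigenvalues and a product of (weight times an explicit function of the imaginary part) over the complex pairs. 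Establishing this density, with the correct spherical weight $w$, is the starting point.

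Second, I would encode the number of real eigenvalues by a fugacity: inserting a factor $\xi$ for each real eigenvalue turns the sum $\sum_k \xi^k p_{N,k}$ into a single average over the ensemble,
\begin{equation}
Z_N(\xi)=\big\langle \xi^{N_{\mathrm{r}}}\big\rangle,
\end{equation}
where $N_{\mathrm{r}}$ is the number of real eigenvalues; expanding over the sectors of fixed $N_{\mathrm{r}}$ gives a family of integrals in which each real-eigenvalue integration carries a factor $\xi$. The third step is to evaluate all these integrals simultaneously by the de Bruijn integration formula, which converts an integral of a Vandermonde against products of weights into a Pfaffian whose entries are the antisymmetric pairings (second moments) of monomials. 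With the fugacity present, each entry splits as a real-line contribution weighted by $\xi^2$ plus a complex-plane contribution, the $\xi^2$ reflecting that a skew-pair can supply two real eigenvalues at once.

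The fourth and decisive step is to diagonalize the Pfaffian by introducing skew-orthogonal polynomials $R_{2l},R_{2l+1}$ adapted to the skew inner product coming from the two pieces (real and complex) above. In this basis the antisymmetric kernel becomes block diagonal with $2\times2$ blocks, so the Pfaffian factorizes into a product over $l$ of the block skew-norms, and each skew-norm is exactly a combination $\xi^2\alpha_l+\beta_l$: here $\alpha_l$ is the real-line skew-norm of the $l$-th polynomial pair and $\beta_l$ the complex-plane skew-norm. Computing these two integrals in closed form — reducing them to Gamma functions to obtain the stated $\alpha_l=\frac{2\pi}{N-1-4l}\frac{\Gamma((N+1)/2)}{\Gamma((N+2)/2)}$ together with the overall Gamma-function prefactor — is the main obstacle, since it requires explicit skew-orthogonal polynomials for the spherical weight and a careful evaluation of the resulting moment integrals.

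Finally, the even/odd dichotomy is handled separately. For $N$ even the Pfaffian is of even size and factorizes directly into the $N/2$ factors of (\ref{FMeven}). For $N$ odd the relevant skew-symmetric matrix has odd size, so one must border it by an extra row and column, accounting for the guaranteed single real eigenvalue; this bordering produces the explicit factor $\pi\xi$ and the half-integer-shifted factors $\xi^2\alpha_{l+1/2}+\beta_{l+1/2}$ appearing in (\ref{FModd}). Collecting the partition-function normalization and the common factors pulled out of the skew-norms into the displayed prefactors of (\ref{FMeven}) and (\ref{FModd}) then completes the identification, and setting $\xi=1$ recovers $Z_N(1)=\sum_k p_{N,k}=1$ as a consistency check.
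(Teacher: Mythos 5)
Your outline is essentially the same route as the paper's: the paper does not prove Theorem \ref{FM} itself but quotes it from Forrester and Mays \cite{FM}, and the method it summarizes --- the element density of $G=T_1^{-1}T_2$, a real Schur decomposition introducing $k$ real and $(N-k)/2$ complex eigenvalues, integrating out the non-eigenvalue variables, and integrating the resulting density sector by sector --- is exactly what you describe, with your fugacity/de Bruijn/skew-orthogonal-polynomial steps and the odd-$N$ bordering being the correct reconstruction of how \cite{FM} carries this out to reach the factored form $\prod_l(\xi^2\alpha_l+\beta_l)$. The one caveat is that, like the paper, you leave the decisive closed-form evaluation (the explicit skew-orthogonal polynomials for the spherical weight and the skew-norm integrals giving $\alpha_l$) to the cited work, which you candidly flag as the main obstacle.
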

\vskip.2cm

The method used in \cite{FM} relies on first obtaining the explicit form of the element probability density function for 
\begin{equation}\label{TT}
G=T_1^{-1}T_2.
\end{equation}
A real Schur decomposition is used to introduce $k$ real and $(N-k)/2$ complex eigenvalues, with the imaginary part of the latter required to be positive (the remaining $(N-k)/2$ eigenvalues are the complex conjugate of these), for $k=0,2,\ldots,N$ ($N$ even) and $k=1,3,\ldots,N$
 ($N$ odd). The variables not depending on the eigenvalues can be integrated out to give the eigenvalue probability density function, in the event that there are $k$ real eigenvalues. And integrating this over all allowed values of the real and positive imaginary part complex eigenvalues gives $P_{N,k}$.

From Theorem \ref{FM} we derive our main result:

\begin{theorem}\label{prob}
Let $P_N$ denote the probability that a real $N\times N\times 2$ tensor whose elements
are independent and normally distributed with mean 0 and variance 1 has rank $N$. We have
\begin{equation}\label{PG}
P_N=\frac{(\Gamma((N+1)/2))^N}{G(N+1)},
\end{equation}
where \begin{equation}\label{GG}
G(N+1):=(N-1)!(N-2)!\ldots 1!\hspace{0.5cm}(N\in\mathbb{Z}^+)
\end{equation}
is the Barnes $G$-function and $\Gamma(x)$ denotes the gamma function.
More explicitly $P_2=\pi/4$, and for $N\ge 4$ even
\begin{equation}\label{Peven}
P_N=\frac{\pi^{N/2}(N-1)^{N-1}(N-3)^{N-3}\cdot\ldots\cdot 3^3}
{2^{N^2/2}(N-2)^{2}(N-4)^{4}\cdot\ldots\cdot 2^{N-2}}\ ,
\end{equation}
while for $N$ odd
\begin{equation}\label{Podd}
P_N=\frac{(N-1)^{N-1}(N-3)^{N-3}\cdot\ldots\cdot 2^2}
{2^{N(N-1)/2}(N-2)^{2}(N-4)^{4}\cdot\ldots\cdot 3^{N-3}}\ .
\end{equation}
Hence $P_N$ for N odd is a rational number but for 
$N$ even it is a rational number multiplied by $\pi^{N/2}$.
The probability for rank $N+1$ is $1-P_N$.
\end{theorem}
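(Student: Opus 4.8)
The starting point is the observation that $P_N=p_{N,N}$, so that $P_N$ is exactly the coefficient of $\xi^N$ in the generating function $Z_N(\xi)$ of Theorem~\ref{FM}. The plan is therefore to extract this leading coefficient from \eqref{FMeven} and \eqref{FModd} and simplify. Each quadratic factor $\xi^2\alpha_\bullet+\beta_\bullet$ contributes a factor $\xi^2$ to the top-degree term, and in the odd case the extra $\pi\xi$ supplies the missing odd power; counting factors shows $Z_N$ has degree exactly $N$ in both cases ($N/2$ quadratics when $N$ is even, $(N-1)/2$ quadratics together with $\pi\xi$ when $N$ is odd). Hence the leading coefficient is the overall prefactor times the product of all the $\alpha$'s (times $\pi$ when $N$ is odd), and the $\beta_l$ and $\beta_{l+1/2}$ never enter, which is why Theorem~\ref{FM} states they are not needed. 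Thus $P_N$ equals that prefactor multiplied by $\prod_l\alpha_l$ in the even case and by $\pi\prod_l\alpha_l\prod_l\alpha_{l+1/2}$ in the odd case.

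The first simplification I would carry out is the telescoping of the Gamma factors. Each $\alpha_l$ and $\alpha_{l+1/2}$ carries a factor $\Gamma(\tfrac{N+1}{2})/\Gamma(\tfrac{N+2}{2})$, and there are $N/2$ of them when $N$ is even and $(N-1)/2$ when $N$ is odd. These combine with the powers $\Gamma(\tfrac{N+1}{2})^{N/2}\Gamma(\tfrac{N+2}{2})^{N/2}$ (even case) or $\Gamma(\tfrac{N+1}{2})^{(N+1)/2}\Gamma(\tfrac{N+2}{2})^{(N-1)/2}$ (odd case) in the prefactor. In both cases all powers of $\Gamma(\tfrac{N+2}{2})$ cancel and one is left with a clean factor $\Gamma(\tfrac{N+1}{2})^N$, precisely the numerator of \eqref{PG}. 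It then remains to show that the leftover constant — the powers of $2$ and $\pi$, the product $\prod_{j=1}^N\Gamma(\tfrac{j}{2})^2$, the denominator product $\prod_l(N-1-4l)$ (and $\prod_l(N-3-4l)$ for $N$ odd), and the sign $(-1)^{\cdots}$ — reduces to $1/G(N+1)$.

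This last reduction is the technical heart of the proof. I would evaluate $\prod_{j=1}^N\Gamma(\tfrac{j}{2})^2$ by separating even from odd arguments, writing the even part as a product of factorials and the odd part via the Legendre duplication formula, and likewise rewrite the arithmetic products $\prod_l(N-1-4l)$ and $\prod_l(N-3-4l)$ as products of odd integers (double factorials) up to an explicit sign. The collected powers of $2$ and $\pi$ should then cancel, and the surviving factorials should assemble into $\prod_{k=1}^{N-1}k!=G(N+1)$ as defined in \eqref{GG}. Since $P_N$ is a probability it must be positive, so the prefactor sign $(-1)^{N(N-2)/8}$ (respectively $(-1)^{(N-1)(N-3)/8}$) has to cancel exactly the sign of the arithmetic product; I expect verifying this sign cancellation, together with the parity-dependent bookkeeping of the half-integer Gamma values, to be the main obstacle. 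Finally, substituting the standard evaluations of $\Gamma(\tfrac{N+1}{2})$ and regrouping the resulting factorials gives the explicit product forms \eqref{Peven} and \eqref{Podd}, while the statement about rank $N+1$ is immediate: since $N$ and $N+1$ are the only typical ranks \cite{tBK} and rank $N$ occurs exactly when all generalized eigenvalues are real, the complementary probability $1-P_N$ is that of rank $N+1$.
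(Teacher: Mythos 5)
Your proposal follows essentially the same route as the paper's proof: extract the coefficient of $\xi^N$ from $Z_N(\xi)$ (so the $\beta$'s drop out), telescope the $\Gamma(\frac{N+1}{2})/\Gamma(\frac{N+2}{2})$ factors against the prefactor to isolate $\Gamma(\frac{N+1}{2})^N$, reduce $\prod_{j=1}^N\Gamma(j/2)^2$ via the duplication formula to produce $G(N+1)$, and cancel the sign of the arithmetic products $\prod_l(N-1-4l)$, $\prod_l(N-3-4l)$ against $(-1)^{N(N-2)/8}$ (resp.\ $(-1)^{(N-1)(N-3)/8}$), exactly as in the paper's equations \eqref{F1}--\eqref{F3}. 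The only part you leave as a plan rather than execute is the final parity-dependent bookkeeping, which you correctly identify as the technical heart and for which you name all the right ingredients, so there is no gap in the approach itself.
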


\begin{proof}
From \cite{B} we know that $P_N=p_{N,N}$. Hence, by Theorem \ref{FM}
\begin{equation}
P_N=p_{N,N}=\frac{1}{N!}\frac{d^N}{d\xi^N}Z_N(\xi)
\end{equation}
Since
\begin{equation}
\frac{1}{N!}\frac{d^N}{d\xi^N}\prod_{l=0}^{\frac{N-2}{2}}(\xi^2\alpha_l+\beta_l)=
\prod_{l=0}^{\frac{N-2}{2}}\alpha_l
\end{equation}
and
\begin{equation}
\frac{1}{N!}\frac{d^N}{d\xi^N}\xi
\prod_{l=0}^{\lceil{\frac{N-1}{4}\rceil-1}}(\xi^2\alpha_l+\beta_l)
\prod_{\lceil{\frac{N-1}{4}\rceil}}^{\frac{N-3}{2}}(\xi^2\alpha_{l+1/2}+\beta_{l+1/2})=
\prod_{l=0}^{\lceil{\frac{N-1}{4}\rceil-1}}\alpha_l
\prod_{\lceil{\frac{N-1}{4}\rceil}}^{\frac{N-3}{2}}\alpha_{l+1/2}
\end{equation}
the values of $\beta_l$ and $\beta_{l+1/2}$ are not needed for the determination of $P_N$.
By (\ref{FMeven})  we immediately find
\begin{equation}
P_N={\frac{(-1)^{N(N-2)/8}\Gamma(\frac{N+1}{2})^{N/2}
\Gamma(\frac{N+2}{2})^{N/2}}{2^{N(N-1)/2}\prod_{j=1}^{N}\Gamma(\frac{j}{2})^2}}
\prod_{l=0}^{\frac{N-2}{2}}\alpha_l
\end{equation}
if $N$ is even. For $N$ odd we use (\ref{FModd}) to get
\begin{equation}
P_N={\frac{(-1)^{(N-1)(N-3)/8}\Gamma(\frac{N+1}{2})^{(N+1)/2}
\Gamma(\frac{N+2}{2})^{(N-1)/2}}{2^{N(N-1)/2}\prod_{j=1}^{N}\Gamma(\frac{j}{2})^2}}\ \pi
\prod_{l=0}^{\lceil{\frac{N-1}{4}\rceil-1}}\alpha_l
\prod_{\lceil{\frac{N-1}{4}\rceil}}^{\frac{N-3}{2}}\alpha_{l+1/2}
\end{equation}
Substituting the expressions for $\alpha_l$ and $\alpha_{l+1/2}$ into these formulas we
obtain, after simplifying, for $N$ even
\begin{equation}\label{15}
P_N={\frac{(-1)^{N(N-2)/8}(2\pi)^{N/2}\Gamma(\frac{N+1}{2})^{N}}
{2^{N(N-1)/2}\prod_{j=1}^{N}\Gamma(\frac{j}{2})^2}}
\prod_{l=0}^{\frac{N-2}{2}}\frac{1}{N-1-4l}\ ,
\end{equation}
and for $N$ odd
\begin{equation}\label{16}
P_N={\frac{(-1)^{(N-1)(N-3)/8}(2\pi)^{(N+1)/2}\Gamma(\frac{N+1}{2})^{N}}
{2^{N(N-1)/2+1}\prod_{j=1}^{N}\Gamma(\frac{j}{2})^2}}
\prod_{l=0}^{\lceil{\frac{N-1}{4}\rceil-1}}\frac{1}{N-1-4l}
\prod_{\lceil{\frac{N-1}{4}\rceil}}^{\frac{N-3}{2}}\frac{1}{N-3-4l}\ .
\end{equation}
Now
\begin{align}\label{F1}
\prod_{j=1}^N \Gamma(j/2)^2&=\frac{\Gamma(1/2)}{\Gamma((N+1)/2)}\prod_{j=1}^N\Gamma(j/2)\Gamma((j+1)/2) \nonumber \\
&=\frac{\Gamma(1/2)}{\Gamma((N+1)/2)} \prod_{j=1}^N 2^{1-j}\sqrt{\pi}\,\Gamma(j) \nonumber \\
&=\frac{\Gamma(1/2)}{\Gamma((N+1)/2)}2^{-N(N-1)/2}\pi^{N/2}G(N+1),
\end{align}
where to obtain the second equality use has been made of the duplication formula for the gamma function, and to obtain the third equality the expression (\ref{GG}) for the Barnes $G$-function has been used. Furthermore, for each $N$ even
\begin{align}
(-1)^{N(N-2)/8}\prod_{l=0}^{(N-2)/2}\frac{1}{N-1-4l}
&=\frac{(-1)^{N(N-2)/8}}{(N-1)(N-5)\ldots (N-1-(2N-4))}\notag\\
&=\frac{1}{(N-1)(N-3)\ldots 3 \cdot 1}\notag\\
&=\frac{\Gamma(1/2)}{2^{N/2}\Gamma((N+1)/2)}, \label{F2}
\end{align}
where to obtain the final equation use is made of the fundamental gamma function recurrence
\begin{equation}\label{xG}
\Gamma(x+1)=x\Gamma(x),
\end{equation}
and for $N$ odd
\begin{align}
&(-1)^{(N-1)(N-3)/8}\prod_{l=0}^{\lceil \frac{N-1}{4}\rceil}\frac{1}{N-1-4l}\prod_{\lceil \frac{N-1}{4}\rceil}^{\frac{N-3}{2}}\frac{1}{N-3-4l}\notag\\
&\hspace{1cm}=(-1)^{(N-1)(N-3)/8} 
\left\{ \begin{tabular}{ll}$\displaystyle  \frac{1}{(N-1)(N-5)\ldots 2}\frac{1}{(-4)(-8)\ldots(-N+3)}$,&
$N=3,7,11,\ldots$\\ $\displaystyle   \frac{1}{(N-1)(N-5)\ldots 4}\frac{1}{(-2)(-6)\ldots(-N+3)}$,& $N=5,9,13,\ldots$ \end{tabular}\right.\notag\\
&\hspace{1cm}=\frac{1}{(N-1)(N-3)\ldots 4\cdot 2}\notag\\
&\hspace{1cm}=\frac{1}{2^{(N-1)/2}\Gamma((N+1)/2)}\label{F3}
\end{align}

Substituting (\ref{F1}) and (\ref{F2}) in (\ref{15}) establishes (\ref{PG}) for $N$ even, while the $N$ odd case of (\ref{PG}) follows by substituting (\ref{F1}) and (\ref{F3}) in (\ref{16}), and the fact that 
\begin{equation}\label{Gp}
\Gamma(1/2)=\sqrt{\pi}. 
\end{equation}
The forms (\ref{Peven}) and (\ref{Podd}) follow from (\ref{PG}) upon use of (\ref{GG}), the recurrence (\ref{xG}) and (for $N$ even) (\ref{Gp}).

\end{proof}


\section{Recursion formulas and asymptotic decay}

By Theorem \ref{prob} it is straightforward to calculate $P_{N+1}/P_N$ from either (\ref{PG}) or
(\ref{Peven}) and (\ref{Podd}), and $P_{N+2}/P_N$ from either (\ref{PG}) or (\ref{Peven}) and (\ref{Podd}).
\begin{corollary}\label{rec}
For general $N$
\begin{equation}
P_{N+1}=P_N\cdot \frac{\Gamma(N/2+1)^{N+1}}{\Gamma((N+1)/2)^N}\frac{1}{\Gamma(N+1)},\hspace{0.5cm}P_{N+2}=P_{N}\cdot \frac{((N+1)/2)^{N+2}\Gamma((N+1)/2)^2}{\Gamma(N+2)\Gamma(N+1)}
\end{equation}
 More explicitly, making use of the double factorial
$$
N!!=\left\{ \begin{tabular}{ll} $N(N-2)\ldots 4\cdot 2$, & $N$ even \\ $N(N-2)\ldots 3\cdot 1$, & $N$ odd,
\end{tabular}\right.
$$ for $N$ even we have the recursion formulas
\begin{equation}\label{receven}
P_{N+1}=P_N\cdot \frac{(N!!)^N}{(2\pi)^{N/2}((N-1)!!)^{N+1}}\ ,\quad
P_{N+2}=P_N\cdot\frac{\pi}{2}\cdot\frac{(N+1)^{N+1}}{2^{2N+1}(N!!)^2}
\end{equation}
and for $N$ odd we have
\begin{equation}\label{recodd}
P_{N+1}=P_N\cdot  \frac{\pi^{(N+1)/2}(N!!)^N}{2^{(3N+1)/2}((N-1)!!)^{N+1}}\ ,\quad
P_{N+2}=P_N\cdot\frac{(N+1)^{N+1}}{2^{2N+1}(N!!)^2}.
\end{equation}
\end{corollary}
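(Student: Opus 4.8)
The plan is to work directly from the closed form (\ref{PG}), reducing each recursion to elementary identities for the Barnes $G$-function and the gamma function. The essential observation is that the definition (\ref{GG}) gives the functional equation $G(N+2)=N!\,G(N+1)=\Gamma(N+1)\,G(N+1)$, since $G(N+2)=N!(N-1)!\cdots 1!$ merely appends the factor $N!$ to $G(N+1)$. Iterating once more yields $G(N+3)=(N+1)!\,N!\,G(N+1)=\Gamma(N+2)\Gamma(N+1)\,G(N+1)$. These two identities are the only structural facts about $G$ that I will need.

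First I would establish the two general recursions. For $P_{N+1}/P_N$ I would form the quotient of (\ref{PG}) evaluated at $N+1$ and at $N$: the numerators contribute $\Gamma((N+2)/2)^{N+1}/\Gamma((N+1)/2)^N=\Gamma(N/2+1)^{N+1}/\Gamma((N+1)/2)^N$, while the Barnes factors contribute $G(N+1)/G(N+2)=1/\Gamma(N+1)$, giving the stated formula at once. For $P_{N+2}/P_N$ I would proceed analogously: the Barnes factors give $G(N+1)/G(N+3)=1/(\Gamma(N+2)\Gamma(N+1))$, and in the numerator I would rewrite $\Gamma((N+3)/2)=\Gamma((N+1)/2+1)=\tfrac{N+1}{2}\,\Gamma((N+1)/2)$ using the recurrence (\ref{xG}). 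The gamma powers then collapse to $((N+1)/2)^{N+2}\Gamma((N+1)/2)^2$, matching the claimed expression.

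Finally I would pass to the explicit double-factorial forms by specialising the half-integer and integer gamma values according to the parity of $N$. For $N$ even, $\Gamma(N/2+1)=(N/2)!$ and $\Gamma((N+1)/2)=2^{-N/2}\sqrt{\pi}\,(N-1)!!$; for $N$ odd, $\Gamma((N+1)/2)=((N-1)/2)!$ and $\Gamma(N/2+1)=2^{-(N+1)/2}\sqrt{\pi}\,N!!$. Substituting these, together with $\Gamma(N+1)=N!$ and the identity $n!=n!!\,(n-1)!!$ relating factorials to double factorials, into the two general recursions reduces each case to a product of powers of $2$, $\pi$, and double factorials, which I would then match against (\ref{receven}) and (\ref{recodd}).

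I expect the only genuine work to be the parity-dependent bookkeeping in this last step: tracking how the powers of $2$ and the $\sqrt{\pi}$ factors arising from the half-integer gamma values combine, and confirming that the $\pi^{(N+1)/2}$ appears precisely in the $N$ odd recursion for $P_{N+1}$ and the lone factor $\pi/2$ in the $N$ even recursion for $P_{N+2}$. No conceptual obstacle arises, since every identity invoked is either the definition (\ref{GG}) or the recurrence (\ref{xG}); the challenge is purely one of careful simplification, and it could equally be carried out by taking ratios of the fully explicit forms (\ref{Peven}) and (\ref{Podd}) as a consistency check.
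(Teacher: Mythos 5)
Your proposal is correct and follows exactly the route the paper intends: the paper offers no written proof beyond remarking that the recursions follow by taking ratios of (\ref{PG}) (or of (\ref{Peven}) and (\ref{Podd})), which is precisely what you do via $G(N+2)=\Gamma(N+1)G(N+1)$, the recurrence (\ref{xG}), and the half-integer gamma evaluations. All of your intermediate identities and the final parity bookkeeping check out against (\ref{receven}) and (\ref{recodd}).
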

We can illustrate the pattern for $P_N$ using Theorem \ref{prob} or Corollary  \ref{rec}. One finds
\begin{equation}
P_{2}=\frac{1}{2^{2}}\cdot\pi\ , \quad P_{3}=\frac{1}{2}
\nonumber
\end{equation}
\begin{equation}
P_{4}=\frac{3^3}{2^{10}}\cdot\pi^2\ , \quad P_{5}=\frac{1}{3^{2}}
\nonumber
\end{equation}
\begin{equation}
P_{6}=\frac{5^5\cdot 3^3}{2^{26}}\cdot \pi^3\ , \quad P_{7}=\frac{3^2}{5^{2}\cdot 2^5}
\nonumber
\end{equation}
\begin{equation}
\quad P_{8}=\frac{7^7\cdot 5^5\cdot 3}{2^{48}}\cdot \pi^4 \ ,
\quad P_{9}=\frac{2^4}{7^{2}\cdot 5^4}
\nonumber
\end{equation}
\begin{equation}
\quad  P_{10}=\frac{7^7\cdot 5^5\cdot 3^{17}}{2^{80}}\cdot \pi^5 \ ,
\quad P_{11}=\frac{5^4}{7^{4}\cdot 3^6\cdot 2^5}
\nonumber
\end{equation}
\begin{equation}
\quad P_{12}=\frac{11^{11}\cdot 7^7\cdot 5^5\cdot 3^{15}}{2^{118}}\cdot \pi^6 \ ,
\quad P_{13}=\frac{5^2}{11^{2}\cdot 7^6\cdot 2^4} \ \ \ \ldots
\end{equation}
Numerically, it is clear that $P_N\to 0$ as $N\to \infty$. Some qualitative insight into the rate of decay
 can be obtained by recalling $P_N=p_{N,N}$ and considering the behaviour of $p_{N,k}$ as a function of $k$. Thus we know from \cite{Ed1} that 
 for large $N$, the mean number of real eigenvalues $E_N:=\langle k \rangle_{p_{N,k}}$ is to leading order equal to $\sqrt{\pi N/2}$, and from \cite{FM} that 
 the corresponding variance $\sigma_N^2:=\langle k^2\rangle_{p_{N,k}}-E_N^2$ is to leading order equal to $(2-\sqrt{2})E_N$. The latter reference also shows that  $\lim_{N \to \infty} \sigma_N p_{N,[\sigma_N x+E_N]} = {1 \over \sqrt{2 \pi}} e^{-x^2/2}$, and is thus $p_{N,k}$ is a standard Gaussian distribution  after centering and
 scaling in $k$ by appropriate multiples of $\sqrt{N}$. It follows  that $p_{N,N}$ is, for large $N$, in the large deviation regime of $p_{N,k}$. We remark that this is similarly true of $p_{N,N}$ in the case of eigenvalues of $N\times N$ real random Gaussian matrices (i.e. the individual matrices $T_1,\,T_2$ of (\ref{TT})), for which it is known $p_{N,N}=2^{-N(N-1)/4}$ \cite{Ed1}, \cite[Section 15.10]{Fo10}.

In fact from the exact expression (\ref{PG}) the explicit asymptotic large $N$ form of $P_N$ can readily be calculated. For this, let
\begin{equation}\label{24}
A=e^{-\zeta'(-1)+1/12}= 1.28242712...
\end{equation}
denote the Glaisher-Kinkelin constant, where $\zeta$ is the Riemann zeta function \cite{W}.

\begin{theorem}\label{asy}
For large $N$,
\begin{equation}\label{asfo}
P_N=  N^{1/12}(\frac{e}{4})^{N^2/4}\cdot Ae^{-1/6}(1+{\rm O}(N^{-1}))
\end{equation}
or equivalently
\begin{equation}\label{41'}
\log P_N=(N^2/4)\log(e/4)+(\log N-1)/12-\zeta'(-1)+{\rm O}(1/N). 
\end{equation}
\end{theorem}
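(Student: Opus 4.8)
The plan is to take logarithms in the exact formula (\ref{PG}) and then substitute the known large-argument asymptotic expansions of the two special functions appearing there. Writing
\[
\log P_N = N\log\Gamma\!\left(\frac{N+1}{2}\right) - \log G(N+1),
\]
reduces the problem to expanding $N\log\Gamma((N+1)/2)$ and $\log G(N+1)$ separately to constant order in $N$ and then combining them.

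For the gamma factor I would use Stirling's series
\[
\log\Gamma(x) = \left(x-\tfrac12\right)\log x - x + \tfrac12\log(2\pi) + \frac{1}{12x} + O(x^{-3}),
\]
with $x=(N+1)/2$. The essential subtlety is that, since the whole expression is afterwards multiplied by $N$, several apparently negligible corrections survive: the term $1/(12x)$ contributes $N/(6(N+1)) = \tfrac16 + O(N^{-1})$, and inside the leading piece one must expand $\log(N+1) = \log N + N^{-1} - \tfrac12 N^{-2} + O(N^{-3})$, the $N^{-1}$ and $N^{-2}$ parts being amplified by the $N^2/2$ prefactor. Collecting these gives
\[
N\log\Gamma\!\left(\frac{N+1}{2}\right) = \frac{N^2}{2}\log N - \frac{N^2}{2}\log 2 - \frac{N^2}{2} + \frac{N}{2}\log(2\pi) - \frac{1}{12} + O(N^{-1}).
\]

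For the Barnes factor I would invoke the standard expansion
\[
\log G(N+1) = \frac{N^2}{2}\log N - \frac{3N^2}{4} + \frac{N}{2}\log(2\pi) - \frac{1}{12}\log N + \zeta'(-1) + O(N^{-1}),
\]
whose constant term is exactly $\zeta'(-1) = \tfrac{1}{12} - \log A$ with $A$ the Glaisher--Kinkelin constant of (\ref{24}). Subtracting, the two $\tfrac{N^2}{2}\log N$ terms cancel, as do the two $\tfrac{N}{2}\log(2\pi)$ terms, while the $N^2$ terms combine to $N^2/4$, leaving
\[
\log P_N = \frac{N^2}{4} - \frac{N^2}{2}\log 2 + \frac{1}{12}\log N - \frac{1}{12} - \zeta'(-1) + O(N^{-1}).
\]
Using $\log(e/4) = 1 - 2\log 2$ to write $\frac{N^2}{4} - \frac{N^2}{2}\log 2 = \frac{N^2}{4}\log(e/4)$ establishes (\ref{41'}); exponentiating and using $e^{-1/12-\zeta'(-1)} = A e^{-1/6}$, which follows from (\ref{24}), then yields (\ref{asfo}).

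The only real difficulty is the bookkeeping of the subleading terms. Because the prefactors grow like $N^2$, relative corrections of order $N^{-2}$ in Stirling's series and of order $N^{-1}$ in the Barnes expansion feed into the constant order, so I would keep both expansions one order beyond the naive truncation and verify carefully that the $O(N^2\log N)$, $O(N^2)$, $O(N\log N)$, $O(N)$ and $O(\log N)$ contributions cancel or combine exactly as above. Once these cancellations are confirmed, the remainder of the argument is routine algebraic rearrangement.
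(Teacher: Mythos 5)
Your proof is correct and follows essentially the same route as the paper: both reduce the claim to the large-argument expansions of $\log\Gamma$ and $\log G(\cdot)$ and carefully track the subleading terms that are amplified by the $N$ and $N^2$ prefactors, arriving at the same cancellations and the same constant $-1/12-\zeta'(-1)=\log A-1/6$. The only difference is organizational: the paper first rewrites $P_N$ as $\bigl(\Gamma(N/2+1)\bigr)^N/G(N+1)$ times $\bigl(\Gamma((N+1)/2)/\Gamma(N/2+1)\bigr)^N$ and disposes of the ratio via $\Gamma(x+1/2)/\Gamma(x)=\sqrt{x}\bigl(1-\tfrac{1}{8x}+{\rm O}(x^{-2})\bigr)$, whereas you apply Stirling's series directly at $x=(N+1)/2$ and absorb the shift by expanding $\log(N+1)$ — both bookkeeping schemes yield (\ref{41'}) and hence (\ref{asfo}).
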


\begin{proof}
We require the $x\rightarrow \infty$ asymptotic expansions of the Barnes $G$-function \cite{We00} and the gamma function
\begin{align}
\log G(x+1)&= \frac{x^2}{2}\log x -\frac{3}{4}x^2+\frac{x}{2}\log 2\pi
-\frac{1}{12}\log x +\zeta'(-1)+{\rm O}\Big(\frac{1}{x} \Big), \label{Sg}\\
\Gamma(x+1)& = \sqrt{2\pi x}\Big(\frac{x}{e} \Big)^x\Big( 1+\frac{1}{12x}+{\rm O}\Big(\frac{1}{x^2} \Big)\Big)\label{St}
\end{align}
For future purposes, we note that a corollary of (\ref{St}), and the elementary large $x$ expansion
\begin{equation}\label{ex}
\Big(1+\frac{c}{x} \Big)^x=e^c\Big(1-\frac{c^2}{2x}+{\rm O}\Big( \frac{1}{x^2}\Big) \Big)
\end{equation}
is the asymptotic formula
\begin{equation}\label{sx}
\frac{\Gamma(x+1/2)}{\Gamma(x)}=\sqrt{x}\Big(1-\frac{1}{8x}+{\rm O}\Big(\frac{1}{x^2} \Big) \Big).
\end{equation}

To make use of these expansions, we rewrite (\ref{PG}) as
\begin{equation}\label{Psu}
P_N=\frac{(\Gamma(N/2+1))^N}{G(N+1)}\Big(\frac{\Gamma((N+1)/2)}{\Gamma(N/2+1)} \Big)^N.
\end{equation}
Now, (\ref{sx}) and (\ref{ex}) show that with
\begin{equation}\label{yN}
y:=N/2
\end{equation}
and $y$ large we have
\begin{equation}\label{ye}
\Big( \frac{\Gamma(y+1/2)}{\Gamma(y+1)}\Big)^N = e^{-y \log y}e^{-1/4}\Big( 1+{\rm O}\Big(\frac{1}{y} \Big)\Big).
\end{equation}
Furthermore, in the notation (\ref{yN}) it follows from (\ref{Sg}) and (\ref{St}) and further use of (\ref{ex}) (only the explicit form of the
leading term is now required) that
\begin{equation}\label{yN1}
\frac{\Gamma(N/2+1)^N}{G(N+1)}=e^{-y^2\log(4/e)}e^{y\log y+\frac{1}{12}\log 2y}e^{1/6-\zeta'(-1)}\Big(1+{\rm O}\Big(\frac{1}{y} \Big) \Big).
\end{equation}
Multiplying together (\ref{ye}) and (\ref{yN1}) as required by (\ref{Psu}) and recalling (\ref{yN}) gives (\ref{asfo}). 

Recalling (\ref{24}), the second stated result (\ref{41'}) is then immediate.
\end{proof}

\begin{corollary}\label{asymp}
For large $N$,
\begin{equation}
\frac{P_{N+1}}{P_N}= \Big(\frac{e}{4}\Big)^{(2N+1)/4}(1+{\rm O}(N^{-1}))
\end{equation}
\end{corollary}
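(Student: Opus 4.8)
The plan is to obtain Corollary \ref{asymp} directly from the asymptotic expansion (\ref{asfo}) of Theorem \ref{asy}, simply by forming the quotient $P_{N+1}/P_N$. First I would write both the numerator and denominator using (\ref{asfo}), replacing $N$ by $N+1$ throughout in the numerator while keeping (\ref{asfo}) as stated in the denominator. Since the prefactor $Ae^{-1/6}$ is a constant independent of $N$, it cancels immediately, leaving only the $N$-dependent algebraic factor $N^{1/12}$, the power of $e/4$, and the two multiplicative error factors to be accounted for.

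The next step is to treat the power of $e/4$. The exponents combine as $(N+1)^2/4-N^2/4=(2N+1)/4$, which produces exactly the claimed factor $(e/4)^{(2N+1)/4}$. For the algebraic prefactor I would write $(N+1)^{1/12}/N^{1/12}=(1+1/N)^{1/12}$ and expand by the binomial series, which shows this factor equals $1+{\rm O}(N^{-1})$ and can therefore be absorbed into the overall error term. Note that here the exponent $1/12$ is fixed rather than growing, so (\ref{ex}) does not apply and the ordinary Taylor/binomial expansion is what is needed.

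Finally I would combine the error terms. The formula (\ref{asfo}) carries a multiplicative factor $(1+{\rm O}(N^{-1}))$ in the denominator, and in the numerator the corresponding factor is $(1+{\rm O}((N+1)^{-1}))=(1+{\rm O}(N^{-1}))$. Since $1/(1+{\rm O}(N^{-1}))=1+{\rm O}(N^{-1})$ for large $N$, the ratio of these two factors is again of the form $1+{\rm O}(N^{-1})$, and together with the prefactor contribution this yields a single $(1+{\rm O}(N^{-1}))$ multiplying $(e/4)^{(2N+1)/4}$, which is the assertion.

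There is no genuine obstacle here, the corollary being an immediate consequence of Theorem \ref{asy}. The only point requiring a moment's care is verifying that the quotient of two independent $(1+{\rm O}(N^{-1}))$ error factors is once more of this form. As a fully self-contained alternative that avoids invoking the full strength of Theorem \ref{asy}, one could instead start from the exact recursion $P_{N+1}/P_N=\Gamma(N/2+1)^{N+1}/(\Gamma((N+1)/2)^N\Gamma(N+1))$ supplied by Corollary \ref{rec} and apply the expansions (\ref{St}) and (\ref{sx}) directly to the gamma functions; this route reaches the same conclusion but demands somewhat more bookkeeping.
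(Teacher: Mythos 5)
Your proposal is correct and follows exactly the route the paper takes: the paper states that the corollary "follows trivially from Theorem \ref{asy}" (and notes, as you do, the alternative derivation from Corollary \ref{rec}), and your computation of the exponent difference $(N+1)^2/4-N^2/4=(2N+1)/4$ together with the absorption of $(1+1/N)^{1/12}$ and the error factors into $1+{\rm O}(N^{-1})$ is precisely the omitted routine verification.
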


This corollary follows trivially from Theorem  \ref{asy}. It can however also be derived directly
from the recursion formulas in Corollary \ref{rec}, without use of Theorem \ref{asy}.

\section*{Acknowledgement}
The work of PJF was supported by the Australian Research Council.


\end{document}